\theoremstyle{plain}
\newtheorem{lemma}{Lemma}
\newtheorem{theorem}{Theorem}
\theoremstyle{remark}
\theoremstyle{definition}
\newtheorem{claim}{Claim}
\renewcommand{\maketitle}{
	\begin{center}

		\baselineskip=0.30in
		{\Large\bfseries \@title} \par
		\vspace{5mm}
		\baselineskip=0.2in
		{\large\bfseries \@author}\par
		\vspace{1mm}
		{\it \@address} \par
		{\small\tt \@email} \par
		\vspace{3mm}
		{\small } \par
	\end{center}
	\vspace{3mm}
}
\newcommand{\address}[1]{\def\@address{#1}}
\newcommand{\email}[1]{\def\@email{#1}}
\address{}
\title{On the Augmented Sombor Index of Graphs}
\author{Kinkar Chandra Das$^{a}$, Akbar Ali$^{b,}$\footnote{Corresponding author.}}
\address{$^a$Department of Mathematics,\\ Sungkyunkwan University, Suwon 16419, Republic of Korea\\
	$^b$Department of Mathematics, College of Science,\\ University of Ha\!'il, Ha\!'il, Saudi Arabia\\
	}
\email{kinkardas2003@gmail.com, akbarali.maths@gmail.com}
\begin{document}

\maketitle

\begin{abstract}
Let $G$ be a connected graph having more than two vertices and let $d_i$ denote the degree of vertex $v_i$ in $G$. Let $E(G)$ represent the edge set of $G$. Then, the augmented Sombor (ASO) index of $G$ is defined as
$ASO(G) = \sum_{v_i v_j \in E(G)} \sqrt{(d_i + d_j - 2)^{-1}(d_i^2 + d_j^2)}.$ It is known that the cycle graph $C_n$ uniquely minimizes the ASO index in the class of all $n$-order unicyclic graphs. In this paper, we prove that the unique $n$-order unicyclic graph of maximum degree $n-1$ maximizes the ASO index in the aforementioned unicyclic graph class.
We also prove that $ASO(G-v_iv_j)<ASO(G)$
whenever neither of the graphs $G-v_iv_j$ and $G$ contains any isolated edge. Utilizing this edge-deletion property, we characterize the unique graph maximizing the ASO index among all fixed-order connected graphs with a specified vertex connectivity (or edge connectivity).
\end{abstract}

\onehalfspacing

\section{Introduction}

Chemical graph theory constitutes a significant subfield of mathematical chemistry in which graph-theoretic methods are employed to represent, analyze, and predict the physical and chemical characteristics of chemical compounds \cite{Leite-24,Wagner-18,Trina-book}. In a chemical graph, vertices model atoms while edges correspond to chemical bonds, thereby providing a mathematically rigorous framework for the study of molecular structure and related properties. Within this setting, real-valued graph invariants are commonly referred to as topological indices.

Among several families of topological indices investigated to date, degree-based indices \cite{KD1,gutman13degree,Ali-CJC-16,borovicanin17zagreb,Ali-IJACM-17} play a particularly prominent role. These indices depend exclusively on the degrees of the vertices, thereby capturing essential local connectivity features of the underlying molecular graph. Their demonstrated success in correlating with a variety of physicochemical properties, together with their computational efficiency, has established them as indispensable tools in quantitative structure-property relationship (QSPR) studies, an area that underpins the prediction and rational design of new chemical compounds \cite{Desmecht-JCIM-24}.

A particularly influential member of the family of degree-based topological indices is the Sombor index, introduced by Gutman \cite{gutman21geo} around half a decade ago. For a graph $G$, this index is defined as
\[
SO(G)=\sum_{v_i v_j \in E(G)} \sqrt{d_i^{\,2} + d_j^{\,2}},
\]
where $E(G)$ denotes the edge set of $G$, and $d_i$ and $d_j$ represent the degrees of the adjacent vertices $v_i$ and $v_j$, respectively. Since its introduction, the Sombor index has spurred numerous studies dedicated to its chemical applications and mathematical aspects; see, for instance, \cite{nithyaa24,CAM3,chen24,CAM4,cruz21unibicy,cruz21trees,das21symmetry,das22trees,das21extremal,deng21moleculartrees,gutman21geo,gutman24eu,liu22review,liu22tetracyclic,cruz21chemical,Milovanovic-21,horoldagvaa21sombor,das24open}.
Motivated by these developments, a new variant of the Sombor index, termed the Augmented Sombor (ASO) index, was proposed in \cite{DGA1} by considering not only the vertex degrees $d_i$ and $d_j$ of the endvertices of each edge $v_iv_j$, but also its edge degree $d_i+d_j-2$. For any graph $G$ that contains no connected component isomorphic to the path graph $P_2$ of order $2$, the ASO index is defined by
\[
ASO(G)
= \sum_{v_i v_j \in E(G)}
  \sqrt{\frac{d_i^{\,2} + d_j^{\,2}}{\,d_i + d_j - 2\,}}.
\]

We now define the terms and notation that are used in the remainder of this paper. The
graph-theoretical terminology used in this paper, but not explicitly defined here, can be found in some standard books such as \cite{Bondy-book,Chartrand-16}. For a vertex $v_i \in V(G)$, let $
N_G(v_i)$ denote the set consisting of all vertices adjacent to $v_i$.
Throughout this paper, we adhere to the conventional notation of graph theory: for example, $C_n$ and $K_n$ denote the cycle and complete graph of order $n$, respectively. A vertex of degree one is called a pendant vertex, and an edge incident with such a vertex is called a pendant edge. An edge whose endvertices are both pendant vertices is referred to as an isolated edge. If $v_i v_j$ is an edge of a graph $G$, then $G - v_i v_j$ denotes the graph obtained from $G$ by deleting the edge $v_i v_j$.
For two vertex-disjoint graphs $G$ and $H$, their disjoint union is written as $G \cup H$. The join of $G$ and $H$, denoted $G \vee H$, is the graph formed by taking $G \cup H$ and adding all possible edges between every vertex of $G$ and every vertex of $H$.
The vertex connectivity of a nontrivial connected graph $G$ is defined as the minimum number of vertices whose removal results in a graph that is either disconnected or trivial. Analogously, the edge connectivity of $G$ is the minimum number of edges whose deletion disconnects the graph. A connected graph whose order and size coincide is referred to as a unicyclic graph.

It is known \cite{DGA1} that the cycle graph $C_n$ uniquely minimizes the ASO index in the class of all $n$-order unicyclic graphs. In this paper, we prove that the unique $n$-order unicyclic graph of maximum degree $n-1$ maximizes the ASO index in the aforementioned unicyclic graph class.
We also prove that $ASO(G-v_iv_j)<ASO(G)$
whenever neither of the graphs $G-v_iv_j$ and $G$ contains any isolated edge. Utilizing this edge-deletion property, we show that $(K_1\cup K_{n-k-1})\vee K_k$ is the unique graph maximizing the ASO index among all $n$-order connected graphs with vertex connectivity (or edge connectivity) $k$, where $n\ge4$.

\section{Unicyclic Graphs}

In this section, we prove that the unique $n$-order unicyclic graph of maximum degree $n-1$ maximizes the ASO index among all $n$-order unicyclic graphs. To achieve this, we first establish the following preliminary result.

\begin{lemma} \label{a1} For $1\leq x\leq a$ and $a>1$,
$$\frac{x^2+a^2}{x+a-2}\leq a+1+\frac{2}{a-1}$$
with equality if and only if $x=1$.
\end{lemma}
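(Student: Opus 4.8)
The plan is to reduce the claimed inequality to a polynomial inequality and then factor it so that the role of the equality case $x=1$ becomes transparent.

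First I would rewrite the right-hand side over a common denominator:
$$a + 1 + \frac{2}{a-1} = \frac{(a+1)(a-1)+2}{a-1} = \frac{a^2+1}{a-1},$$
which already matches the left-hand side at $x=1$, confirming the equality case. Since $1\le x\le a$ and $a>1$, both denominators $x+a-2\ (\ge a-1)$ and $a-1$ are strictly positive, so the inequality is equivalent to showing that
$$h(x) := (a^2+1)(x+a-2) - (x^2+a^2)(a-1) \ge 0 .$$

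Next, because equality is expected exactly at $x=1$, I would verify $h(1)=0$ and extract the factor $(x-1)$. A short computation gives
$$h(x) = (x-1)\bigl[-(a-1)x + (a^2-a+2)\bigr].$$
The factor $(x-1)$ is nonnegative on $[1,a]$ and vanishes only at $x=1$. For the second factor $p(x):=-(a-1)x+(a^2-a+2)$, note that it is strictly decreasing in $x$ (as $a>1$), so its minimum over $[1,a]$ is attained at $x=a$, where $p(a)=-(a-1)a+(a^2-a+2)=2>0$. Hence $p(x)\ge 2>0$ throughout $[1,a]$.

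Combining these, $h(x)=(x-1)p(x)\ge 0$ on $[1,a]$, with equality precisely when $x-1=0$, that is $x=1$, which establishes the claim. The only nonroutine step is spotting the factorization; once $(x-1)$ is divided out, the remaining linear factor is immediately seen to be positive, so there is no genuine obstacle. An alternative would be a calculus argument on $f(x)=(x^2+a^2)/(x+a-2)$, but this forces a case split according to whether the single interior critical point of $f$ lies inside $[1,a]$, followed by a comparison of endpoint values $f(1)-f(a)=1/(a-1)>0$; the algebraic factorization avoids this case analysis entirely.
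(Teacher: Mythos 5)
Your proof is correct, and it takes a genuinely different route from the paper's. The paper argues by calculus: it differentiates $f(x)=\frac{x^2+a^2}{x+a-2}$, observes that $f$ is decreasing for $1\le x\le -a+2+\sqrt{2(a^2-2a+2)}$ and increasing thereafter, and concludes that $f(x)\le\max\{f(1),f(a)\}=f(1)=a+1+\frac{2}{a-1}$, with equality only at $x=1$. You instead clear the (positive) denominators and factor; I verified that $h(x)=-(a-1)x^2+(a^2+1)x-(a^2-a+2)$ does equal $(x-1)\bigl(-(a-1)x+(a^2-a+2)\bigr)$, and that the linear cofactor, being decreasing in $x$, is bounded below on $[1,a]$ by its value $2>0$ at $x=a$, so the inequality and its equality case follow cleanly. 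Your argument is more elementary and makes the equality case transparent. Two remarks on the comparison. First, your closing criticism of the calculus route is slightly off: no case split on whether the critical point lies in $[1,a]$ is needed, since a function that decreases and then increases attains its maximum over any closed interval at an endpoint wherever that critical point sits; the only extra step is the endpoint comparison $f(1)-f(a)=\frac{1}{a-1}>0$, which the paper uses implicitly. Second, the paper's derivative computation is not wasted work in context: the proof of the edge-connectivity theorem later invokes the \emph{proof} of this lemma (not merely its statement) for the precise monotonicity pattern of $x\mapsto\sqrt{\frac{(n-1)^2+x^2}{n+x-3}}$, including the breakpoint $-n+3+\sqrt{2(n^2-4n+5)}$; if your factorization proof replaced the paper's, that later argument would have to re-derive this monotonicity information separately.
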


\begin{proof} Let
  $$f(x)=\frac{x^2+a^2}{x+a-2},~~~1\leq x\leq a,~a>1.$$
We have
  $$f'(x)=\frac{(x+a-2)\,2x-(x^2+a^2)}{(x+a-2)^2}=\frac{x^2+2(a-2)\,x-a^2}{(x+a-2)^2}.$$
Then one can easily see that $f(x)$ is a increasing function on $x\geq -a+2+\sqrt{2(a^2-2a+2)}$ and a decreasing function on $1\leq x\leq -a+2+\sqrt{2(a^2-2a+2)}$. Since $1\leq x\leq a$, we obtain
  $$f(x)\leq \max\{f(1),\,f(a)\}=f(1)=a+1+\frac{2}{a-1}$$
with equality if and only if $x=1$.
\end{proof}
We define a function $h$ as  $h(d_i, d_j) = \frac{d_i^2 + d_j^2}{d_i + d_j - 2}$.
\begin{lemma}{\rm \cite{DGA1}} \label{ee1} Let $G$ be a graph of order $n\,(>8)$ with any edge $v_iv_j$. Then
$h(d_i,d_j)<h(n-2,n-2)<h(n-1,n-3)=h(n-2,1)<h(n-1,2)<h(n-1,n-2)<h(n-1,n-1)<h(n-1,1)$
for $(d_i,d_j)\notin \Big\{(n-1,1),\,(n-1,2),\,(n-2,1),\,(n-2,n-2),\,(n-1,n-3),\,(n-1,n-2),\,(n-1,n-1)\Big\}$.
\end{lemma}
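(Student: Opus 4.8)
The plan is to establish the displayed chain in two stages: first evaluate $h$ at the seven listed degree pairs and verify the six strict inequalities linking them, and then show that every remaining admissible degree pair produces a value strictly below $h(n-2,n-2)$. Since $h$ is symmetric I may assume $d_i=x\le y=d_j$ with $1\le x\le y\le n-1$ and $x+y\ge 3$; the only pair excluded by the last constraint is $(1,1)$, corresponding to a $P_2$-component on which $h$ is undefined. For the first stage I would record the closed forms
\[
h(n-2,n-2)=n-1+\tfrac{1}{n-3},\qquad h(n-2,1)=h(n-1,n-3)=n-1+\tfrac{2}{n-3},
\]
\[
h(n-1,2)=n-1+\tfrac{4}{n-1},\qquad h(n-1,n-2)=n-\tfrac{n-5}{2n-5},
\]
\[
h(n-1,n-1)=n+\tfrac{1}{n-2},\qquad h(n-1,1)=n+\tfrac{2}{n-2},
\]
obtained by elementary division, and then check the chain: each of the six inequalities reduces to a polynomial inequality in $n$ that holds for $n>8$ (for example $h(n-2,1)<h(n-1,2)$ amounts to $n>5$, while $h(n-1,2)<h(n-1,n-2)$ amounts to $(n-4)(n-5)>0$). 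This stage is purely computational.

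The decisive stage is to prove $h(x,y)<h(n-2,n-2)$ for every admissible pair outside the special set, and I would split on the larger degree $y$. For $y\le n-3$, Lemma~\ref{a1} applied with $a=y$ gives immediately $h(x,y)\le h(1,y)=y+1+\tfrac{2}{y-1}$, whose right-hand side is strictly increasing in the integer $y$ for $y\ge 3$; hence $h(x,y)\le h(1,n-3)=n-2+\tfrac{2}{n-4}$, and a one-line comparison confirms $h(1,n-3)<h(n-2,n-2)$ for $n>8$. Thus a single application of Lemma~\ref{a1} disposes of the entire region $y\le n-3$ at once, which is the structural heart of the argument.

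It remains to treat $y=n-2$ and $y=n-1$, the two values carrying the special pairs. Here I would use the shape of $x\mapsto h(x,y)$ exhibited inside the proof of Lemma~\ref{a1}: on $[1,y]$ it strictly decreases up to $x^{\ast}=2-y+\sqrt{2(y^2-2y+2)}$ and strictly increases afterward. For $n>8$ one verifies that $x^{\ast}$ lies strictly between the integer endpoints of the relevant ranges, so the maximum of $h(\cdot,y)$ over any interval of integers omitting both endpoints is attained at one of that interval's ends. Consequently, for $y=n-1$ the non-special values $x\in\{3,\dots,n-4\}$ are dominated by $\max\{h(3,n-1),\,h(n-4,n-1)\}$, and for $y=n-2$ the non-special values $x\in\{2,\dots,n-3\}$ are dominated by $\max\{h(2,n-2),\,h(n-3,n-2)\}$; comparing these four explicit quantities with $h(n-2,n-2)$ completes the proof. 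I expect the main obstacle to be precisely these near-endpoint comparisons: $h(3,n-1)<h(n-2,n-2)$ is the tightest, and it is exactly here that the hypothesis $n>8$ is forced, since at $n=8$ it already fails ($h(3,7)=7.25>7.2=h(6,6)$). A secondary technical point is confirming that $x^{\ast}$ indeed falls inside the claimed integer ranges, which is what legitimizes the reduction to endpoints.
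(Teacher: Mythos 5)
Your proposal is correct, but there is nothing in this paper to compare it against: Lemma \ref{ee1} is stated here without proof, being quoted from \cite{DGA1}, so any argument is necessarily your own route. That said, your two-stage argument is sound and fits this paper's toolkit well. The closed forms and the six pairwise comparisons in the chain are all routine and check out (e.g. $h(n-2,1)<h(n-1,2)$ reduces to $n>5$, and $h(n-1,2)<h(n-1,n-2)$ to $(n-4)(n-5)>0$). The structural core — one application of Lemma \ref{a1} giving $h(x,y)\le h(1,y)=y+1+\tfrac{2}{y-1}$, which is increasing for $y\ge3$, so that the whole region $y\le n-3$ is dominated by $h(1,n-3)=n-2+\tfrac{2}{n-4}<n-1+\tfrac{1}{n-3}=h(n-2,n-2)$ — is clean and cleverly reuses a lemma the paper proves for a different purpose. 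For $y\in\{n-2,n-1\}$ your endpoint reduction is valid, though your ``secondary technical point'' is superfluous: a function that decreases on $[1,x^{\ast}]$ and increases on $[x^{\ast},y]$ attains its maximum over \emph{any} subinterval $[p,q]\subseteq[1,y]$ at $p$ or at $q$, wherever $x^{\ast}$ falls, so no verification of $x^{\ast}$'s location is needed. The four endpoint comparisons you defer do hold for $n\ge9$: $h(2,n-2)<h(n-2,n-2)$ reduces to $(n-4)^2>0$, $h(n-3,n-2)<h(n-2,n-2)$ to $n^2-7n+11>0$, $h(3,n-1)<h(n-2,n-2)$ to $n^2-12n+30>0$, and $h(n-4,n-1)<h(n-2,n-2)$ to $n^2-11n+23>0$. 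The last two both fail at $n=8$ — your counterexample $h(3,7)=7.25>7.2=h(6,6)$ is right, and likewise $h(4,7)=65/9>7.2$ — confirming that the hypothesis $n>8$ enters exactly where you said it does.
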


\begin{theorem} Let $G$ be a unicyclic graph of order $n$. Then
\begin{align}
ASO(G)\leq (n-3)\,\sqrt{n+\frac{2}{n-2}}+2\,\sqrt{n-1+\frac{4}{n-1}}+2\label{kin1}
\end{align}
with equality if and only if $G\cong S'_n$.
\end{theorem}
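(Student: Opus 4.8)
The plan is to first identify the extremal graph, verify it attains the stated bound, and then show every other unicyclic graph falls strictly below it. Here $S'_n$ denotes the graph obtained from the star $K_{1,n-1}$ by joining two of its leaves, i.e.\ a triangle one of whose vertices carries $n-3$ pendant neighbours; its degree sequence is $(n-1,2,2,1,\dots,1)$ with $n-3$ ones. I would begin by computing $ASO(S'_n)$ directly: the $n-3$ pendant edges are of type $(n-1,1)$ and contribute $(n-3)\sqrt{h(n-1,1)}=(n-3)\sqrt{n+2/(n-2)}$; the two edges joining the hub to the degree-$2$ vertices are of type $(n-1,2)$ and contribute $2\sqrt{h(n-1,2)}=2\sqrt{n-1+4/(n-1)}$; and the remaining triangle edge is of type $(2,2)$ and contributes $\sqrt{h(2,2)}=2$. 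Summing reproduces exactly the right-hand side of \eqref{kin1}, so equality holds for $S'_n$.

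Next I would split on the maximum degree $\Delta$ of $G$. If $\Delta=n-1$, the vertex of degree $n-1$ is adjacent to all others, accounting for $n-1$ edges; since a unicyclic graph has exactly $n$ edges, precisely one further edge is present, and it must join two neighbours of the hub, producing a triangle. All such graphs are isomorphic to $S'_n$, giving equality and uniqueness in this case. The real content of the theorem therefore lies in proving the strict inequality $ASO(G)<ASO(S'_n)$ whenever $\Delta\le n-2$. Writing $\phi(a)=\sqrt{h(1,a)}$, the central tool is Lemma~\ref{a1}, which yields $h(d_i,d_j)\le h\big(1,\max(d_i,d_j)\big)=\phi(\max(d_i,d_j))^2$ for every (non-isolated) edge, with equality only at a pendant edge. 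Since $\phi$ is increasing for arguments $\ge 3$ (and $\phi(2)=2$ is dominated), every edge of $G$ satisfies $\sqrt{h(d_i,d_j)}\le \phi(\Delta)$.

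For $\Delta\le n-3$ this already suffices: the crude estimate $ASO(G)\le n\,\phi(\Delta)\le n\,\phi(n-3)$ can be checked to stay below $(n-1)\sqrt n$, which in turn is smaller than $ASO(S'_n)$, so these graphs are disposed of immediately (and $\Delta\le 2$ forces $G\cong C_n$, the known minimizer). Thus the only delicate situation is $\Delta=n-2$. Here $n-\Delta=2$, so there are exactly two edges not incident to a maximum-degree vertex $v_1$, and $v_1$ misses a single vertex $w$. I would exploit this rigidity: the two ``non-hub'' edges touch at most four vertices, so all but a bounded number of the $n-2$ neighbours of $v_1$ are pendant, each such edge being of type $(n-2,1)$. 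Using Lemma~\ref{ee1} to order the relevant values, namely $h(n-2,n-2)<h(n-2,1)=h(n-1,n-3)<h(n-1,2)<h(n-1,1)$, and the unicyclic identity $\sum_{d_v\ge 3}(d_v-2)=p$ (with $p$ the number of pendants), I would bound the handful of non-pendant contributions and compare against $ASO(S'_n)$.

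The main obstacle is exactly this $\Delta=n-2$ comparison, and the reason the naive argument fails is instructive: bounding all $n$ edges by $\phi(n-2)$ gives $n\,\phi(n-2)$, which actually exceeds $ASO(S'_n)$, because it pretends every edge is simultaneously a pendant edge attached to a degree-$(n-2)$ vertex. The degree budget forbids this — a degree-$(n-2)$ vertex cannot carry $n-3$ pendant neighbours and still close up into a unicyclic graph without its degree rising to $n-1$ — so the genuine task is to couple the number of near-maximal edges to the degree sequence. All the margins involved are only of order $\sqrt n$ (the per-edge gap $\phi(n-1)-\phi(n-2)\approx \tfrac1{2\sqrt n}$ accumulated over $\sim n$ edges competes against corrections of the same order from the two non-hub edges), so the inequality must be extracted from a short but careful enumeration of the finitely many structures available when only two edges lie off the hub. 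Finally, since Lemma~\ref{ee1} requires $n>8$, the remaining small orders would be verified directly.
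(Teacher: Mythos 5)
Your strategy---split on the maximum degree $\Delta$, kill $\Delta\le n-3$ with a crude per-edge bound, and treat $\Delta=n-2$ structurally---differs from the paper's and could in principle be made to work, but as written it has two genuine defects, one of them fatal. The fatal one: the case $\Delta=n-2$, which you yourself call ``the main obstacle'' and correctly diagnose as the place where the naive bound $n\,\phi(n-2)$ exceeds $ASO(S'_n)$, is never actually proved. You describe what you \emph{would} do (order values via Lemma~\ref{ee1}, invoke the unicyclic degree identity, perform ``a short but careful enumeration''), but no enumeration is carried out and no comparison with $ASO(S'_n)$ is made; since everything else in your write-up is routine, the missing case is precisely the mathematical content of the theorem. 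The plan is in fact viable: when $\Delta=n-2$ the hub misses a single vertex $w$ and only two edges lie off the hub, so up to isomorphism there are exactly \emph{three} such unicyclic graphs, according to whether $w$ has degree $2$ (a $4$-cycle through the hub) or is pendant, attached either to a triangle vertex or to a hub-neighbour off the triangle---each can be compared with $ASO(S'_n)$ directly. But you did not identify these graphs or do the comparisons. For contrast, the paper never isolates $\Delta=n-2$ at all: for every $\Delta\le n-2$ and $n\ge10$ it bounds each edge avoiding the second-largest-degree vertex by $\sqrt{h(n-2,1)}$ and then cases on the second-largest degree $\Delta_2$, using Lemma~\ref{a1} to show that the at least $\Delta_2-1$ edges at that vertex contribute only about $\sqrt{\Delta_2+1+\tfrac{2}{\Delta_2-1}}$ each, which is where the needed saving of order $\sqrt{n}$ comes from.

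Second, your disposal of $\Delta\le n-3$ rests on a false inequality. You claim $n\,\phi(\Delta)\le n\,\phi(n-3)<(n-1)\sqrt n$. But $\phi(n-3)^2=h(1,n-3)=n-2+\frac{2}{n-4}$, so
\[
n^2\,\phi(n-3)^2-(n-1)^2\,n=n\left(\frac{2n}{n-4}-1\right)=\frac{n(n+4)}{n-4}>0,
\]
i.e.\ $n\,\phi(n-3)>(n-1)\sqrt n$ for every $n\ge5$, the opposite of what you assert. The inequality you ultimately need, $n\,\phi(n-3)<ASO(S'_n)$, does happen to be true, but only by a margin tending to $2$ (both quantities equal $n^{3/2}-\sqrt n+O(1)$, with constant terms $o(1)$ and $2$ respectively), so it requires a direct and careful verification rather than the broken detour through $(n-1)\sqrt n$. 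A small additional slip: $\phi(2)=\sqrt{h(1,2)}=\sqrt5$, not $2$; the value $2$ is $\sqrt{h(2,2)}$. To repair the proposal you must (i) replace the $\Delta\le n-3$ chain by a direct comparison with $ASO(S'_n)$, and (ii) actually carry out the three-graph enumeration for $\Delta=n-2$; until then this is an outline, not a proof.
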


\begin{proof} Let $v_s$ be the maximum degree vertex of degree $\Delta$ in $G$. Then $d_s=\Delta$. If $\Delta=n-1$, then $G\cong S'_n$ with
$$ASO(G)=(n-3)\,\sqrt{n+\frac{2}{n-2}}+2\,\sqrt{n-1+\frac{4}{n-1}}+2$$
and hence the equality in \eqref{kin1} holds. Otherwise, $\Delta\leq n-2$. For $n\leq 9$, by Sage \cite{SA}, one can easily check that the result (\ref{kin1}) strictly holds. So we now assume that $n\geq 10$. By Lemma \ref{ee1}, for any non-pendant edge $v_iv_j\in E(G)$ satisfying $2\leq d_j\leq d_i\leq n-2$, we obtain
$$\sqrt{\frac{d^2_i+d^2_j}{d_i+d_j-2}}\le \sqrt{h(n-2,n-2)}=\sqrt{n-1+\frac{1}{n-3}}<\sqrt{n-1+\frac{2}{n-3}}.$$
For any pendant edge $v_iv_j\in E(T)$ satisfying $1=d_j<d_i\le n-2$,
$$\sqrt{\frac{d^2_i+d^2_j}{d_i+d_j-2}}<\sqrt{h(n-2,1)}=\sqrt{n-1+\frac{2}{n-3}}.$$
From the above two inequalities, for any edge $v_iv_j\in E(G)$ satisfying $1\leq d_j\leq d_i\leq n-2$, we obtain
\begin{align}
\sqrt{\frac{d^2_i+d^2_j}{d_i+d_j-2}}<\sqrt{n-1+\frac{2}{n-3}}.\label{1r0}
\end{align}
 Let $C_k\,(\geq 3)$ be the cycle in $G$. Also, let $v_t$ be the second maximum degree vertex of degree $\Delta_2$ in $G$. Then $d_t=\Delta_2$ and $2\leq \Delta_2\leq \frac{n-k+4}{2}$. By Lemma \ref{a1}, for $v_i\in N_G(v_t)$ with $d_i\leq \Delta_2$, we obtain
\begin{align}
\frac{\Delta^2_2+d^2_i}{\Delta_2+d_i-2}\leq \Delta_2+1+\frac{2}{\Delta_2-1}\label{1r1}
\end{align}
with equality if and only if $d_i=1$. Let $S=\{v_iv_t\in E(G):\,v_i\in N_G(v_t)\}$. We consider the following three cases:

\vspace*{3mm}

\noindent
${\bf Case\,1.}$ $\Delta_2=2$. In this case, the maximum degree vertex $v_s$ lies on the cycle $C_k$, and all other vertices on $C_k$ have degree $2$. Hence, there exists an edge $v_pv_q\in E(C_k)$ such that $d_p=d_q=2$. For this edge, we obtain
    $$\sqrt{\frac{d^2_p+d^2_q}{d_p+d_q-2}}=2.$$
Using this fact together with (\ref{1r0}), we obtain
\begin{align*}
ASO(G)
&=\sqrt{\frac{d^2_p+d^2_q}{d_p+d_q-2}}+\sum\limits_{v_iv_j\in E(G)\backslash \{v_pv_q\}}\,\sqrt{\frac{d^2_i+d^2_j}{d_i+d_j-2}}\\[3mm]
&<2+(n-1)\sqrt{n-1+\frac{2}{n-3}}\\[3mm]
&<(n-3)\,\sqrt{n+\frac{2}{n-2}}+2\,\sqrt{n-1+\frac{4}{n-1}}+2.
\end{align*}
Thus, the inequality (\ref{kin1}) holds strictly in the present case.

\vspace*{3mm}

In the rest of the cases, the second maximum degree vertex $v_t$ is adjacent to at least $(\Delta_2-1)$ vertices of degree at most $\Delta_2$.

\vspace*{3mm}
\noindent
${\bf Case\,2.}$ $3\leq \Delta_2\leq 5$.  Then, by (\ref{1r0}) and (\ref{1r1}), we obtain
    $$\sum\limits_{v_i\in N_G(v_t)}\,\sqrt{\frac{d^2_i+\Delta^2_2}{d_i+\Delta_2-2}}<\sqrt{n-1+\frac{2}{n-3}}+\sqrt{\frac{13}{2}}\,(\Delta_2-1).$$
Using this with (\ref{1r0}), we obtain
\begin{align}
ASO(G)&=\sum\limits_{v_iv_j\in E(G)}\,\sqrt{\frac{d^2_i+d^2_j}{d_i+d_j-2}}\nonumber\\[3mm]
  &=\sum\limits_{v_i\in N_G(v_t)}\,\sqrt{\frac{d^2_i+\Delta^2_2}{d_i+\Delta_2-2}}+\sum\limits_{v_iv_j\in E(G)\backslash S}\,\sqrt{\frac{d^2_i+d^2_j}{d_i+d_j-2}}\nonumber\\[3mm]
  &<\sqrt{n-1+\frac{2}{n-3}}+\sqrt{\frac{13}{2}}\,(\Delta_2-1)+(n-\Delta_2)\,\sqrt{n-1+\frac{2}{n-3}}.\label{1vv1}
\end{align}
We now prove the following claim:

\vspace*{3mm}

\begin{claim}\label{c1}
\begin{align*}
&\sqrt{\frac{13}{2}}\,(\Delta_2-1)+(n-\Delta_2)\,\sqrt{n-1+\frac{2}{n-3}}\\&<(n-3)\,\sqrt{n+\frac{2}{n-2}}+\sqrt{n-1+\frac{2}{n-3}}+2.
\end{align*}

\end{claim}

\noindent
{\bf Proof of Claim \ref{c1}.} Since $n\geq 10$, for $4\leq \Delta_2 \leq 5$, we obtain
\begin{align*}
&\sqrt{\frac{13}{2}}\,(\Delta_2-1)+(n-\Delta_2)\,\sqrt{n-1+\frac{2}{n-3}}\\&\leq 3\,\sqrt{\frac{13}{2}}+(n-4)\,\sqrt{n-1+\frac{2}{n-3}}\\[3mm]
&<(n-3)\,\sqrt{n+\frac{2}{n-2}}+\sqrt{n-1+\frac{2}{n-3}}+2
\end{align*}
as
  $$3\,\sqrt{\frac{13}{2}}<8<2\,\sqrt{n-1+\frac{2}{n-3}}+2.$$
Otherwise, $\Delta_2=3$. Now,
\begin{align*}
&\sqrt{\frac{13}{2}}\,(\Delta_2-1)+(n-\Delta_2)\,\sqrt{n-1+\frac{2}{n-3}}\\&=2\,\sqrt{\frac{13}{2}}+(n-3)\,\sqrt{n-1+\frac{2}{n-3}}\\[3mm]
&<(n-3)\,\sqrt{n+\frac{2}{n-2}}+\sqrt{n-1+\frac{2}{n-3}}+2
\end{align*}
as
$n\geq 10,~2\,\sqrt{\frac{13}{2}}<\frac{516}{100}<\sqrt{n+\frac{2}{n-2}}+2$ and
$$\sqrt{n-1+\frac{2}{n-3}}<\sqrt{n+\frac{2}{n-2}}.$$
Hence, {\bf Claim \ref{c1}} holds.

\vspace*{3mm}

Using {\bf Claim \ref{c1}} in (\ref{1vv1}), we obtain
  \begin{align*}
ASO(G)&<(n-3)\,\sqrt{n+\frac{2}{n-2}}+2\,\sqrt{n-1+\frac{2}{n-3}}+2\\&<(n-3)\,\sqrt{n+\frac{2}{n-2}}+2\,\sqrt{n-1+\frac{4}{n-1}}+2.  \end{align*}
Thus, in the present case, (\ref{kin1}) holds strictly.

\vspace*{3mm}

\noindent
${\bf Case\,3.}$ $\Delta_2=6$. Then, by (\ref{1r0}) and (\ref{1r1}), we obtain
   \begin{align*}
       \sum\limits_{v_i\in N_G(v_t)}\,\sqrt{\frac{d^2_i+\Delta^2_2}{d_i+\Delta_2-2}}&<\sqrt{n-1+\frac{2}{n-3}}+\sqrt{\frac{37}{5}}\,(\Delta_2-1)\\&=\sqrt{n-1+\frac{2}{n-3}}+5\,\sqrt{\frac{37}{5}}.   \end{align*}
Since $n\geq 10$, using the above inequality with (\ref{1r0}), we obtain
\begin{align*}
ASO(G)&=\sum\limits_{v_i\in N_G(v_t)}\,\sqrt{\frac{d^2_i+\Delta^2_2}{d_i+\Delta_2-2}}+\sum\limits_{v_iv_j\in E(G)\backslash S}\,\sqrt{\frac{d^2_i+d^2_j}{d_i+d_j-2}}\\[3mm]
  &<\sqrt{n-1+\frac{2}{n-3}}+5\,\sqrt{\frac{37}{5}}+(n-6)\,\sqrt{n-1+\frac{2}{n-3}}\\[3mm]
  &<\frac{1361}{100}+(n-5)\,\sqrt{n-1+\frac{2}{n-3}}\\[3mm]
  &<(n-1)\,\sqrt{n-1+\frac{2}{n-3}}+2\\[3mm]
  &<(n-3)\,\sqrt{n+\frac{2}{n-2}}+2\,\sqrt{n-1+\frac{4}{n-3}}+2
\end{align*}
as $n\geq 10$ and
$$4\,\sqrt{n-1+\frac{2}{n-3}}+2>14>\frac{1361}{100}.$$
The inequality (\ref{kin1}) holds strictly.

\vspace*{3mm}

\noindent
${\bf Case\,4.}$ $7\leq \Delta_2\leq \frac{n-k+4}{2}$. Now,
\begin{align}
ASO(G)&=\sum\limits_{v_i\in N_G(v_t)}\,\sqrt{\frac{d^2_i+\Delta^2_2}{d_i+\Delta_2-2}}+\sum\limits_{v_iv_j\in E(G)\backslash S}\,\sqrt{\frac{d^2_i+d^2_j}{d_i+d_j-2}}.\label{1r2}
\end{align}

\vspace*{3mm}

\noindent
${\bf Case\,4.1.}$ $7\leq \Delta_2\leq \frac{n-3}{2}$. In this case $n\geq 17$. We note that $\Delta_2+1+\frac{2}{\Delta_2-1}$ attains its maximum value over $7\leq \Delta_2\leq \frac{n-3}{2}$ at $\Delta_2= \frac{n-3}{2}$. Hence, for $d_i\leq \Delta_2$, from \eqref{1r1}, we obtain
\begin{equation}\label{eq-Ak-1}
     \frac{\Delta^2_2+d^2_i}{\Delta_2+d_i-2}\leq \frac{n-1}{2}+\frac{4}{n-5}<\frac{1}{2}\,\Big(n+\frac{2}{n-2}\Big)
     \end{equation}
as $n\geq 17$. Recall that the second maximum degree vertex $v_t$ is adjacent to at least $(\Delta_2-1)$ vertices of degree at most $\Delta_2$. Hence,  using (\ref{eq-Ak-1}) and (\ref{1r0}), we obtain
\begin{equation}\label{eq-ak-2}
\sum\limits_{v_i\in N_G(v_t)}\,\sqrt{\frac{d^2_i+\Delta^2_2}{d_i+\Delta_2-2}}<(\Delta_2-1)\,\sqrt{\frac{1}{2}\,\Big(n+\frac{2}{n-2}\Big)}+\sqrt{n-1+\frac{2}{n-3}}.
\end{equation}
Since $\Delta_2\geq 7$, using (\ref{1r0}) and (\ref{eq-ak-2}) in (\ref{1r2}), we obtain
\begin{align*}
ASO(G)&<(\Delta_2-1)\,\sqrt{\frac{1}{2}\,\Big(n+\frac{2}{n-2}\Big)}+\sqrt{n-1+\frac{2}{n-3}}\\[3mm]
&\quad+(n-\Delta_2)\,\sqrt{n-1+\frac{2}{n-3}}\\[3mm]
 &\leq 6\,\sqrt{\frac{1}{2}\,\Big(n+\frac{2}{n-2}\Big)}+(n-6)\,\sqrt{n-1+\frac{2}{n-3}}\\[3mm]
 &<5\,\sqrt{n+\frac{2}{n-2}}+(n-6)\,\sqrt{n-1+\frac{2}{n-3}}\\[3mm]
 &<(n-3)\,\sqrt{n+\frac{2}{n-2}}+2\,\sqrt{n-1+\frac{4}{n-3}}+2
\end{align*}
as $n\geq 17$ and
$\sqrt{n+\frac{2}{n-2}}>\sqrt{n-1+\frac{2}{n-3}}<\sqrt{n-1+\frac{4}{n-3}}.$
Hence, the inequality (\ref{kin1}) strictly holds.

\vspace*{3mm}

\noindent
${\bf Case\,4.2.}$ $\Delta_2>\frac{n-3}{2}$. We have $\frac{n-3}{2}<\Delta_2\leq \frac{n-k+4}{2}\leq \frac{n+1}{2}$ as $k\geq 3$. Since $G$ is unicyclic, we have $\Delta<\frac{n+5}{2}$. We note that $\Delta_2+1+\frac{2}{\Delta_2-1}$ attains its maximum value over $\frac{n-3}{2}<\Delta_2\leq \frac{n+1}{2}$ at $\Delta_2= \frac{n+1}{2}$. Hence, for $d_i\le \Delta_2$, from (\ref{1r1}), we obtain
     $$\frac{\Delta^2_2+d^2_i}{\Delta_2+d_i-2}\leq \frac{n+3}{2}+\frac{4}{n-1}<\frac{1}{1.44}\,\Big(n+\frac{2}{n-2}\Big)$$
as $n\geq 10$. Using this result with (\ref{1r0}), we obtain
$$\sum\limits_{v_i\in N_G(v_t)}\,\sqrt{\frac{d^2_i+\Delta^2_2}{d_i+\Delta_2-2}}<\frac{\Delta_2-1}{1.2}\,\sqrt{n+\frac{2}{n-2}}+\sqrt{n-1+\frac{2}{n-3}}.$$
Since $\Delta<\frac{n+5}{2}$, by Lemma \ref{a1}, for $v_i\in N_G(v_s)$, we obtain
\begin{align}
\frac{\Delta^2+d^2_i}{\Delta+d_i-2}&\leq \Delta+1+\frac{2}{\Delta-1}<\frac{n+7}{2}+\frac{4}{n+3}<n+\frac{2}{n-2}\label{1r4}
\end{align}
as $n\geq 10$. Since $\Delta_2\geq 7$, using the above results with (\ref{1r0}), from (\ref{1r2}), we obtain
\begin{align*}
ASO(G)&<\frac{(\Delta_2-1)}{1.2}\,\sqrt{n+\frac{2}{n-2}}+\sqrt{n-1+\frac{2}{n-3}}\\[3mm]
 &\quad+(n-\Delta_2)\,\sqrt{n-1+\frac{2}{n-3}}\\[3mm]
 &\leq \frac{6}{1.2}\,\sqrt{n+\frac{2}{n-2}}+(n-6)\,\sqrt{n-1+\frac{2}{n-3}}\\[3mm]
 &=5\,\sqrt{n+\frac{2}{n-2}}+(n-6)\,\sqrt{n-1+\frac{2}{n-3}}\\[3mm]
 &<(n-3)\,\sqrt{n+\frac{2}{n-2}}+2\,\sqrt{n-1+\frac{4}{n-3}}+2.
\end{align*}
Therefore, in the present case, the inequality (\ref{kin1}) strictly holds.
\end{proof}

\section{Edge-Deletion Property}\label{sec-3}

In this section, we prove that $ASO(G-v_iv_j)<ASO(G)$ whenever neither of the graphs $G-v_iv_j$ and $G$ contains any isolated edge. To establish this result, we first present the following lemma:
\begin{lemma}\label{lem-1}
The inequality
\begin{equation}\label{eq-lem-1}
\frac{x^{2}+2xy-y^{2}-4x}{(x+y-2)^{3/2}\sqrt{x^2+y^2}}
> -\frac{1}{\sqrt{x}}
\end{equation}
holds for all real numbers $x$ and $y$ satisfying $x\ge2$ and $y\ge1$.
\end{lemma}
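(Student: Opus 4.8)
The plan is to clear denominators and reduce the claim to a polynomial inequality, using a sign split on the numerator. Write $N(x,y)=x^{2}+2xy-y^{2}-4x$ for the numerator of the left-hand side and $D(x,y)=(x+y-2)^{3/2}\sqrt{x^{2}+y^{2}}$ for its denominator. Since $x\ge 2$ and $y\ge 1$ force $x+y-2\ge 1>0$, we have $D>0$, and the asserted inequality is equivalent, after multiplying through by $D\sqrt{x}>0$, to
\[
N\sqrt{x}+D>0 .
\]
(Conceptually this says nothing more than that $\sqrt{(x^{2}+y^{2})/(x+y-2)}+\sqrt{x}$ is increasing in $x$, since the left-hand side of \eqref{eq-lem-1} is exactly twice the $x$-derivative of the first square root; but it is cleaner to argue algebraically.)

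First I would dispose of the easy case $N\ge 0$: then $N\sqrt{x}\ge 0$ and $D>0$, so $N\sqrt{x}+D>0$ holds at once. The whole content therefore lies in the region where $N<0$. There $-N\sqrt{x}>0$, so the target is equivalent to $D>-N\sqrt{x}$, and, both sides being positive, to the squared polynomial inequality
\[
P(x,y):=(x+y-2)^{3}(x^{2}+y^{2})-x\,(x^{2}+2xy-y^{2}-4x)^{2}>0 .
\]
The crucial point to flag is that this must be proved only on the constrained region $\{N<0\}$: the polynomial $P$ is \emph{not} positive on all of $\{x\ge 2,\ y\ge 1\}$ (for instance $P(100,10)<0$, a point where $N>0$ and the original inequality is trivial), so the hypothesis $N<0$ genuinely has to be used.

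To exploit that hypothesis I would rewrite $-N=(y-x)^{2}-2x(x-2)$, so that on the relevant region $x\ge 2$, $y\ge 1$ the condition $N<0$ reads $(y-x)^{2}>2x(x-2)$, i.e. $y$ lies outside the interval $\big[x-\sqrt{2x(x-2)},\,x+\sqrt{2x(x-2)}\big]$. Fixing $x\ge 2$ and regarding $P(x,\cdot)$ as a quintic in $y$ with positive leading coefficient, I would prove $P>0$ on each of the (at most two) sub-intervals of $[1,\infty)$ on which $N<0$. The boundary values are all favourable: where $N=0$ one has $P=D^{2}>0$; at $y=1$ one computes the clean factorisation $P(x,1)=x^{2}(x-2)(x+4)+2x-1>0$ for $x\ge 2$; and $P(x,y)\to+\infty$ as $y\to\infty$.

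The main obstacle is then to rule out any interior sign change of $P(x,\cdot)$ inside the region $N<0$, which cannot follow from the boundary values alone precisely because $P$ does dip negative in the complementary region $N>0$. I would handle this by a sign analysis of $P(x,\cdot)$ on the two pieces: on the unbounded piece $y\ge x+\sqrt{2x(x-2)}$ by controlling the sign of $\partial_{y}P$ (it is positive at the left endpoint, and one shows it does not vanish on the piece, so $P$ rises monotonically from its positive boundary value), and on the small bounded piece $1\le y<x-\sqrt{2x(x-2)}$, which is nonempty only for $2\le x<1+\sqrt{2}$, by a direct estimate, since there $-N$ is small and $xN^{2}$ is dominated by the bounded-below quantity $D^{2}$. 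Both pieces reduce to one-variable polynomial positivity statements that can be settled either by explicit root localisation or by rewriting the relevant expression as a sum of terms with nonnegative coefficients after the shift $x=2+u$; establishing these last positivity facts cleanly is the part requiring the most care.
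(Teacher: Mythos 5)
Your reduction is sound, and your route is genuinely different from the paper's. The paper never squares: it works directly with $f(x,y)=N(x,y)+D(x,y)/\sqrt{x}$ (your $N\sqrt{x}+D$ up to the positive factor $\sqrt{x}$), proves $\partial f/\partial y>0$ on the region $1\le y<3x$ so that $f(x,y)\ge f(x,1)>x^{2}-2x\ge0$ there, and disposes of $y\ge 3x$ by crude term-by-term estimates; no sign restriction is ever needed. Your alternative — squaring only on $\{N<0\}$ — is legitimate, and the subtlety you flag is real: the polynomial $P$ is indeed negative at points like $(100,10)$ where $N>0$, so the constraint $N<0$ must be carried along. Your checkable claims are all correct, including the factorisation $P(x,1)=x^{2}(x-2)(x+4)+2x-1$ and the description of $\{N<0\}\cap\{y\ge1\}$ as at most two $y$-intervals, the bounded one nonempty only for $2\le x<1+\sqrt2$.

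The genuine gap is that the proposal stops exactly where the work begins: the assertion that $\partial_{y}P$ does not vanish on the unbounded piece is given no argument, and the ``direct estimate'' on the bounded piece is not carried out. In your formulation these two statements \emph{are} the lemma; without them you have a plan, not a proof. The gap is fillable, and along the lines you indicate. On the unbounded piece $y\ge x+\sqrt{2x(x-2)}$ one has
\[
\partial_{y}P=3(x+y-2)^{2}(x^{2}+y^{2})+2y(x+y-2)^{3}-4x(-N)(y-x),
\]
and since $-N=(y-x)^{2}-2x(x-2)\le(y-x)^{2}$ and $0\le y-x\le x+y-2$, after dividing by $(x+y-2)^{2}$ it suffices to check
\[
3(x^{2}+y^{2})+2y(x+y-2)-4x(y-x)=7x^{2}-2xy+5y^{2}-4y>0,
\]
which holds for $y\ge1$ because the discriminant in $x$ equals $8y(14-17y)<0$; so $P$ increases in $y$ from its positive value $D^{2}$ at the endpoint $N=0$. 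On the bounded piece $2\le x<1+\sqrt2$, $1\le y<x-\sqrt{2x(x-2)}$, one has $0<-N\le(x-1)^{2}-2x(x-2)=1+2x-x^{2}\le1$, hence $xN^{2}\le x<1+\sqrt2$, while $D^{2}=(x+y-2)^{3}(x^{2}+y^{2})\ge5$, so $P>0$ outright. With these two computations inserted your argument is complete, and arguably cleaner than the paper's since all the analysis becomes polynomial; as submitted, however, the lemma has not been proved.
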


\begin{proof}
    Consider
    \[
f(x,y)=x^{2}+2xy-y^{2}-4x +\frac{1}{\sqrt{x}} (x+y-2)^{3/2}\sqrt{x^{2}+y^{2}}
,   \]
where $x\ge2$ and $y\ge1$.
Then, we have to prove that $f(x,y)> 0$. If $1\le y \le x$ and $x\ge2$, then we have
\begin{align*}
\frac{\partial f}{\partial y}&=    \frac{y (x+y-2)^{3/2}}{ \sqrt{x(x^2+y^2)}}+\frac{3 \sqrt{x^2+y^2} \sqrt{x+y-2}}{2 \sqrt{x}}+2x-2y > 0.
\end{align*}
If $x< y < 3x$ and $x\ge2$, then
\begin{align*}
\frac{\partial f}{\partial y}&>    \frac{\sqrt{x^2+y^2} \sqrt{x+y-2}}{ \sqrt{x}}+2x-2y\\
&>\frac{\sqrt{2xy} \sqrt{x+y-2}}{ \sqrt{x}}-2(y-x)\\
&=\frac{2y(x+y-2)-4(y-x)^2}{\sqrt{2y(x+y-2)}+2(y-x)}\\
&>\frac{2\left(\frac{3(y-x)}{2}\right)\left(\frac{3(y-x)}{2}\right)-4(y-x)^2}{\sqrt{2y(x+y-2)}+2(y-x)}>0.
\end{align*}
Therefore, if $1\le y < 3x$ and $x\ge2$, then $\frac{\partial f}{\partial y}>0$, and hence
\begin{align*}
f(x,y)&\ge f(x,1)=x^2+\frac{(x-1)^{3/2} \sqrt{x^2+1}}{\sqrt{x}}-2 x-1 >  x^2-2x\ge0.
\end{align*}
It remains to prove $f(x,y)>0$ for $y\ge3x\ge6$.
Since each of the inequalities $x^{2}-4x>-4x^2$, $x+y-2\ge y$ and $\sqrt{x^{2}+y^{2}}> y$ holds for $x\ge2$ and $y\ge1$, the following chain of inequalities holds for $y\ge3x\ge6$:
\begin{align*}
f(x,y)&>2xy-y^2-4x^2 + \frac{y^{5/2}}{\sqrt{x}}\\
&>-y^2-3x^2 + \frac{y^{5/2}}{\sqrt{x}}\\
&\ge -y^2-3\left(\frac{y}{3}\right)^2 + \frac{y^{5/2}}{\sqrt{\frac{y}{3}}}>0.
\end{align*}

\end{proof}

The following result shows that the ASO index decreases (increases, respectively) whenever an edge is removed (added, respectively) to a graph under a mild condition.

\begin{theorem} \label{k1}
Let $G$ be a graph containing no isolated edge. If $v_iv_j\in E(G)$ such that $G-v_iv_j$ contains no isolated edge, then
\[
ASO(G) > ASO(G-v_iv_j).
\]
\end{theorem}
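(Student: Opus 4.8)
The plan is to prove the single-edge statement directly, by tracking which edge-contributions change when $v_iv_j$ is removed. Write $d_i,d_j$ for the degrees of the endvertices in $G$ and assume without loss of generality that $d_i\ge d_j$; since $G$ contains no isolated edge, $d_i\ge2$. Deleting $v_iv_j$ lowers each of $d_i,d_j$ by one and leaves every other degree unchanged, so $ASO(G)-ASO(G-v_iv_j)$ is the sum of three pieces: the gain $\sqrt{(d_i^2+d_j^2)/(d_i+d_j-2)}$ from the deleted edge; the adjustments over the $d_i-1$ surviving edges at $v_i$; and the adjustments over the $d_j-1$ surviving edges at $v_j$. Writing $\phi_y(x)=\sqrt{(x^2+y^2)/(x+y-2)}$ for the contribution of an edge with endpoint-degrees $x$ and $y$, each surviving edge at $v_i$ (whose far endpoint has some degree $y$) changes by $\phi_y(d_i)-\phi_y(d_i-1)$, and similarly at $v_j$. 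The whole task reduces to showing that the positive gain outweighs these two (possibly negative) neighbour sums.

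The engine is Lemma \ref{lem-1}. A direct differentiation gives
\[
\phi_y'(x)=\frac12\cdot\frac{x^2+2xy-y^2-4x}{(x+y-2)^{3/2}\sqrt{x^2+y^2}},
\]
whose numerator is precisely the one in Lemma \ref{lem-1}; thus $\phi_y'(x)>-\tfrac{1}{2\sqrt{x}}$ whenever $x\ge2$ and $y\ge1$. Hence, if an endpoint has degree $d\ge3$, the mean value theorem on $[d-1,d]\subseteq\{x\ge2\}$ gives $\phi_y(d)-\phi_y(d-1)=\phi_y'(\xi)>-\tfrac{1}{2\sqrt{\xi}}>-\tfrac{1}{2\sqrt{d-1}}$ for every $y\ge1$, and summing the $d-1$ such terms bounds that endpoint's neighbour sum below by $-\tfrac12\sqrt{d-1}$. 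Consequently, in the generic case $d_i,d_j\ge3$,
\[
ASO(G)-ASO(G-v_iv_j)>\sqrt{\frac{d_i^2+d_j^2}{d_i+d_j-2}}-\frac{\sqrt{d_i-1}}{2}-\frac{\sqrt{d_j-1}}{2}.
\]
I would then close this case in two elementary moves: concavity of the square root (equivalently $2\sqrt{pq}\le p+q$) yields $\tfrac12(\sqrt{d_i-1}+\sqrt{d_j-1})\le\sqrt{(d_i+d_j-2)/2}$, and the latter is at most $\sqrt{(d_i^2+d_j^2)/(d_i+d_j-2)}$ because $2(d_i^2+d_j^2)-(d_i+d_j-2)^2=(d_i-d_j)^2+4(d_i+d_j-1)>0$. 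Chaining these two bounds gives strict positivity.

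I expect the low-degree boundary to be the main obstacle: when $d_i$ or $d_j$ lies in $\{1,2\}$, the interval $[d-1,d]$ reaches $x=1$, which is outside the hypothesis $x\ge2$ of Lemma \ref{lem-1} (and the derivative bound genuinely fails there, e.g.\ at $(x,y)=(1,2)$), so the mean value argument is unavailable on that side. These configurations are finite in type and I would settle them by direct estimates. A degree-$1$ endpoint has an empty neighbour sum, so only the gain survives there. A degree-$2$ endpoint contributes the single term $\psi(d):=\phi_d(2)-\phi_d(1)$ with $d\ge2$, the no-isolated-edge hypothesis on $G-v_iv_j$ forcing the surviving neighbour to have degree at least $2$; here $\phi_d(1)^2-\phi_d(2)^2=1-\tfrac{2(d-2)}{d(d-1)}\le1$ while $\phi_d(1)+\phi_d(2)\ge\phi_d(2)=\sqrt{d+4/d}\ge2$, so $|\psi(d)|\le\tfrac12$. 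Substituting these bounds into the gain for the remaining cases reduces everything to short numerical verifications such as $4(d_i^2+1)>(d_i-1)^2$ (when $d_j=1$), $\sqrt{d_i+4/d_i}-\tfrac12\sqrt{d_i-1}-\tfrac12>0$ (when $d_j=2$ and $d_i\ge3$), and $2-\tfrac12-\tfrac12>0$ or $\sqrt5-\tfrac12>0$ (when $d_i=2$), each of which holds.
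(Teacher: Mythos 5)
Your proposal is correct, and its skeleton coincides with the paper's own proof: the same three-part decomposition of $ASO(G)-ASO(G-v_iv_j)$ into the deleted-edge gain plus the two neighbour sums, the same key Lemma~\ref{lem-1} fed through the mean value theorem to obtain $\Theta_i>-\tfrac{1}{2}\sqrt{d_i-1}$, and the same quadratic-mean/concavity step to show the gain dominates. The genuine difference is at the low-degree boundary, and it works in your favour. The paper applies Lemma~\ref{lem-1} at the mean-value point $\xi$ for \emph{every} endpoint of degree at least $2$, hence also when the degree equals exactly $2$, in which case $\xi\in(1,2)$ lies outside the lemma's hypothesis $x\ge2$; this is not a cosmetic issue, because the inequality \eqref{eq-lem-1} genuinely fails there: at $(x,y)=(1,2)$ the left-hand side equals $-3/\sqrt{5}<-1$, and by continuity the failure persists for $x$ slightly larger than $1$ with $y=2$. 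So the paper's derivation of $\theta_j(v_s)>-\tfrac{1}{2\sqrt{d_j-1}}$ is unjustified as written when $d_j=2$ (the bound itself is still true, but it needs a separate argument). You identified exactly this obstacle, restricted the mean-value argument to endpoints of degree at least $3$, and settled degree-$1$ and degree-$2$ endpoints directly: an empty neighbour sum in the first case, and in the second the bound $|f(2,d)-f(1,d)|\le\tfrac12$ (in the paper's notation for $f$), obtained from the difference-of-squares computation together with the observation that the no-isolated-edge hypothesis on $G-v_iv_j$ forces the surviving neighbour to have degree $d\ge2$; the remaining numerical verifications you list all check out. The price of your route is a small amount of extra casework; what it buys is a proof that is airtight precisely where the paper's argument has a (fixable) gap.
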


\begin{proof} In this proof, we set \[
f(x,y):=\sqrt{\frac{x^2+y^2}{x+y-2}}.
\]
and we denote by $d_\ell$ the degree of a vertex $v_\ell\in V(G-v_iv_j)=V(G)$ in $G$.
We note that
\begin{align}\label{eq-main_G-uv-}
ASO(G)-ASO(G-v_iv_j)
&=
\sum_{v_r\in N_G(v_i)\setminus\{v_j\}}\big( f(d_i,d_r)-f(d_i-1,d_r)\big)\nonumber\\
&\quad +\sum_{v_s\in N_G(v_j)\setminus\{v_i\}}\big( f(d_j,d_s)-f(d_j-1,d_s)\big)\nonumber\\
&\quad +f(d_i,d_j).
\end{align}
Set
$\Theta_i=\sum_{v_r\in N_G(v_i)\setminus\{v_j\}}\theta_i(v_r)$ and  $\Theta_j=\sum_{v_s\in N_G(v_j)\setminus\{v_i\}}\theta_j(v_s),$
where
$\theta_i(v_r)=f(d_i,d_r)-f(d_i-1,d_r)$ and $\theta_j(v_s)=f(d_j,d_s)-f(d_j-1,d_s).$
Then, \eqref{eq-main_G-uv-} yields
\begin{align}\label{eq-main_G-uv}
ASO(G)-ASO(G-v_iv_j)= \Theta_i + \Theta_j + f(d_i,d_j)
\end{align}
We note that $\max\{d_i,d_j\}\ge2.$
Without loss of generality, we assume that $d_i\ge d_j$. We observe that $\Theta_j=0$ for $d_j=1$. Hence, whenever we consider $\theta_j(v_s),$ it will be assumed that $d_j\ge2$.

By using the mean value theorem, we observe that, for any vertex $v_r\in N_G(v_i)\setminus\{v_j\}$, there exists $\xi_r$ such that $1\le d_i-1 <\xi_r<d_i$ and
\begin{equation}\label{theta_u-w-}
\theta_i(v_r)= \frac{\xi_r^{2}+2\xi_r d_r-d_r^{2}-4\xi_r}{2(\xi_r+d_r-2)^{3/2}\sqrt{\xi_r^2+d_r^2}}.
\end{equation}
Similarly, if $d_j\ge2$, then for any vertex $v_s\in N_G(v_j)\setminus\{v_i\}$, there exists $\xi_s$ such that $1\le d_j-1 <\xi_s<d_j$ and
\begin{equation}\label{theta_v-x-}
\theta_j(v_s)= \frac{\xi_s^{2}+2\xi_s d_s-d_s^{2}-4\xi_s}{2(\xi_s+d_s-2)^{3/2}\sqrt{\xi_s^2+d_s^2}}.
\end{equation}
By Lemma \ref{lem-1}, from \eqref{theta_u-w-} and \eqref{theta_v-x-}, it follows that
\[
\theta_i(v_r)>-\frac{1}{2\sqrt{\xi_r}}> -\frac{1}{2\sqrt{d_i-1}} \
\text{ and } \
\theta_j(v_s)>-\frac{1}{2\sqrt{\xi_s}}> -\frac{1}{2\sqrt{d_j-1}},
\]
provided that $d_j\ge2$.
Therefore,
\[
\Theta_i> -\frac{\sqrt{d_i-1}}{2}
\
\text{ and }
\
\Theta_j\ge-\frac{\sqrt{d_j-1}}{2},
\]
where the last inequality holds not only for $d_j\ge2$ but also for $d_j=1$.
Consequently, we have
\begin{align}\label{last-ineq}
\Theta_i+\Theta_j+f(d_i,d_j)
&> -\frac{\sqrt{d_i-1}+\sqrt{d_j-1}}{2}+\sqrt{\frac{d_i^2+d_j^2}{d_i+d_j-2}}\nonumber\\
&>-\frac{\sqrt{d_i-1}+\sqrt{d_j-1}}{2}+\sqrt{\frac{d_i+d_j}{2}},
\end{align}
where the last inequality in \eqref{last-ineq} holds because $(d_i-d_j)^2+2(d_i+d_j)>0,$ which gives $2[d_i^2+d_j^2]>(d_i+d_j)(d_i+d_j-2).$
Also, we have
\begin{equation}\label{final-ineq-2}
\frac{\sqrt{d_i-1}+\sqrt{d_j-1}}{2}
\le \sqrt{\frac{(d_i-1)+(d_j-1)}{2}}<\sqrt{\frac{d_i+d_j}{2}}.
\end{equation}
Hence, the desired inequality follows from \eqref{eq-main_G-uv}, \eqref{last-ineq} and \eqref{final-ineq-2}.
\end{proof}

\section{Vertex/Edge Connectivity}
In the present section, we show that $(K_1\cup K_{n-k-1})\vee K_k$ is the unique graph maximizing the ASO index among all $n$-order connected graphs with vertex connectivity (or edge connectivity) $k$, where $n\ge4$. 
First, we establish a result concerning the vertex connectivity.

\begin{theorem}\label{k0} Let $G$ be a connected graph of order $n\ge4$ with vertex connectivity $k$. Then
\begin{align}
ASO(G)&\leq k\,\sqrt{\frac{(n-1)^2+k^2}{n+k-3}}+{k\choose 2}\,\sqrt{n+\frac{1}{n-2}}\nonumber\\&\quad+k\,(n-k-1)\,\sqrt{n-\frac{1}{2}+\frac{5}{2(2n-5)}}\nonumber\\
&\quad+{n-k-1\choose 2}\,\sqrt{n-1+\frac{1}{n-3}}\label{1k2}
\end{align}
with equality in (\ref{1k2}) if and only if $G\cong (K_1\cup K_{n-k-1})\vee K_k$ for $k\le n-2$, whereas $G\cong K_n$ for $k=n-1$.
\end{theorem}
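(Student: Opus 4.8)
The plan is to prove that every $ASO$-maximizer in the class coincides with $(K_1\cup K_{n-k-1})\vee K_k$, first using the edge-deletion property to pin down its coarse structure and then a one-variable convexity argument to fix the sizes of the two remaining blocks. I would begin by disposing of the case $k=n-1$: the only connected graph of order $n$ with $\kappa=n-1$ is $K_n$, and substituting $k=n-1$ into the right-hand side of \eqref{1k2} (where the $\binom{n-k-1}{2}$- and $k(n-k-1)$-terms vanish) returns exactly $ASO(K_n)$, so equality holds there. Assume from now on that $k\le n-2$, and fix a maximizer $G$ (the class is finite and nonempty) together with a minimum vertex cut $S$ with $|S|=k$; write $C_1,\dots,C_t$ ($t\ge2$) for the components of $G-S$. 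Since $G$ is connected with $n\ge4$, neither $G$ nor any connected graph on $V(G)$ has an isolated edge, so Theorem~\ref{k1} applies in the edge-addition direction and yields a strict increase of $ASO$ whenever an edge is inserted while connectivity is maintained. Inserting any absent edge inside $S$, inside a single $C_i$, or between $S$ and $G-S$ leaves $S$ a cut separating the $C_i$; hence $\kappa$ stays equal to $k$ while $ASO$ strictly increases. Maximality forces all such edges to be present already, i.e.\ $G\cong(K_{n_1}\cup\cdots\cup K_{n_t})\vee K_k$ with $n_1+\cdots+n_t=n-k$.

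To reduce to $t=2$, suppose $t\ge3$ and add one edge joining two of the components. After deleting $S$ the graph still has at least two components, so $\kappa$ remains $k$, yet $ASO$ strictly increases by Theorem~\ref{k1}; this contradicts maximality. Thus $t=2$ and $G\cong(K_a\cup K_b)\vee K_k$ with $a+b=n-k$ and $a,b\ge1$; say $a\le b$. No further edge is admissible, since joining the two cliques would make $G-S$ connected and raise $\kappa$ above $k$, so the only remaining freedom is the split $(a,b)$.

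The decisive step is to show this split must be $(1,n-k-1)$. In $(K_a\cup K_b)\vee K_k$ the vertices of $K_a$, $K_b$, $K_k$ have degrees $a+k-1$, $b+k-1$, $n-1$, so the total contribution is $\binom{k}{2}\sqrt{(n-1)^2/(n-2)}+\psi(a)+\psi(b)$, where $\psi(m)$ collects the internal edges of a size-$m$ clique together with its edges to $K_k$:
\[
\psi(m)=\binom{m}{2}\sqrt{\frac{(m+k-1)^2}{m+k-2}}+mk\,\sqrt{\frac{(m+k-1)^2+(n-1)^2}{m+k+n-3}}.
\]
Relocating one vertex from the smaller clique to the larger, that is $(a,b)\mapsto(a-1,b+1)$, changes $ASO$ by exactly $\psi(b+1)-\psi(b)-\bigl(\psi(a)-\psi(a-1)\bigr)=D(b)-D(a-1)$, where $D(m)=\psi(m+1)-\psi(m)$. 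Consequently, if $\psi$ is strictly convex then $D$ is strictly increasing, and $b\ge a>a-1$ makes this change strictly positive; iterating the relocation until the smaller block has a single vertex then identifies $(K_1\cup K_{n-k-1})\vee K_k$ as the unique maximizer and delivers \eqref{1k2} with equality precisely there.

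The main obstacle is therefore the strict convexity of $\psi$ on $m\ge1$. I would establish it through the discrete inequality $\psi(m+1)-2\psi(m)+\psi(m-1)>0$ for integer $m\ge2$ (all that the chaining of $D$ needs), or equivalently by verifying $\psi''>0$ after clearing radicals. The delicate point is that the join term $mk\sqrt{((m+k-1)^2+(n-1)^2)/(m+k+n-3)}$ is only mildly convex—the outer $\sqrt{\,\cdot\,}$ contributes a concave correction—so the argument must show that the strongly convex clique term $\binom{m}{2}\sqrt{(m+k-1)^2/(m+k-2)}\sim\tfrac12 m^{2}\sqrt{m}$ dominates; I expect this to reduce to an elementary but somewhat lengthy estimate, uniform over all $k\le n-2$.
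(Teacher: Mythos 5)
Your structural reduction is sound and is essentially the paper's: invoking Theorem \ref{k1} in the edge-addition direction (legitimate here, since connected graphs of order $n\ge4$, and the class itself, contain no isolated edge), both you and the authors reduce to graphs of the form $(K_a\cup K_b)\vee K_k$ with $a+b=n-k$, ruling out three or more components of $G-S$ exactly as you do, and the right-hand side of \eqref{1k2} is indeed $ASO((K_1\cup K_{n-k-1})\vee K_k)$. Where you diverge is the final comparison among the splits $(a,b)$: you propose a discrete-convexity (rearrangement) argument for your function $\psi$, whereas the paper never compares two splits directly --- it overestimates every edge contribution of $(K_t\cup K_{n-k-t})\vee K_k$ for $t\ge2$ via Lemma \ref{ee1} (e.g.\ $h(k+t-1,k+t-1)<h(n-2,n-2)$ and $h(n-1,k+t-1)\le h(n-1,2)$), uses the count $\binom{t}{2}+\binom{n-k-t}{2}\le\binom{n-k-1}{2}-(n-k-3)$, and then spends roughly a page (the function $\phi$, its second-derivative analysis, and inequality \eqref{Eq-connectivity-01}) absorbing the error terms to land strictly below the right-hand side of \eqref{1k2}.

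The genuine gap is that your decisive step --- strict discrete convexity of $\psi$, i.e.\ $\psi(m+1)-2\psi(m)+\psi(m-1)>0$ uniformly over $2\le m\le n-k-2$ and $1\le k\le n-4$ --- is only announced, never proven. Every other part of your argument funnels into this one inequality, and it is precisely where the difficulty of the theorem lives: the join term is genuinely concave in places (numerically, for $k=1$ and $n=100$, the second difference of $mk\sqrt{((m+k-1)^2+(n-1)^2)/(m+k+n-4)}$ at $m=2$ is about $-0.09$), so the claim is a quantitative balance between that deficit and the convexity of the clique term, and it must be verified for all admissible $(m,k,n)$. The fact that the paper's own version of this comparison is the most technical portion of its proof is strong evidence that "an elementary but somewhat lengthy estimate" cannot simply be waved at; until that lemma is supplied, what you have is an outline, not a proof. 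Two further points. First, your $\psi$ contains an off-by-one error: an edge joining a clique vertex of degree $m+k-1$ to a $K_k$ vertex of degree $n-1$ has $d_i+d_j-2=m+k+n-4$, not $m+k+n-3$ (compare the first term of \eqref{1k2} at $m=1$); any convexity verification carried out with your denominator would establish the wrong statement. Second, the iteration "relocate until the smaller block is a singleton" is unnecessary: since you fixed a maximizer, a single relocation producing a strict increase whenever $a\ge2$ already yields the contradiction, and uniqueness follows at once.
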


\begin{proof} For $k=n-1$, we have $G\cong K_n$ and hence the equality in (\ref{1k2}) holds. In what follows, we assume that $k\le n-2$. If $G\cong (K_1\cup K_{n-k-1})\vee K_k$, then one can easily see that the equality holds in (\ref{1k2}). Otherwise, $G\ncong (K_1\cup K_{n-k-1})\vee K_k$. Since $G$ has vertex connectivity $k$, there exists a
set $S\subset V(G)$ consisting of $k$ vertices such that $G-S$ has at least two components. If $G-A$ consists of at least three components, then inserting an edge connecting the vertices lying in two distinct components of $G-A$ enlarges the value of $ASO(G)$ (by Theorem \ref{k1}) but the vertex connectivity of $G$ remains the same. Thereby, it is sufficient to prove the result when the graph $G-A$ has only two components; we name them as $G_1$ and $G_2$ such that $|V(G_1)|\leq |V(G_2)|$. Suppose that $|V(G_1)|=t$. Then $|V(G_2)|=n-k-t$ and $t\leq \frac{n-k}{2}$. By Theorem \ref{k1}, we obtain
 $$ASO(G)\leq ASO((K_{t}\cup K_{n-k-t})\vee K_k).$$
Since $G\ncong (K_1\cup K_{n-k-1})\vee K_k$, we have $2\leq t\leq \frac{n-k}{2}$, which yields $n\geq 5$ and $k\leq n-4$. Thus, we have $2\leq k+t-1\leq n-t-1\leq n-3$. Hence, by Lemma \ref{ee1}, we obtain
\begin{align*}
&h(k+t-1,k+t-1)<h(n-2,n-2)=n-1+\frac{1}{n-3},\\
&h(n-t-1,n-t-1)<h(n-2,n-2)=n-1+\frac{1}{n-3},\\
&h(n-1,k+t-1)\leq h(n-1,2)=n-1+\frac{4}{n-1},\\
&h(n-1,n-t-1)\leq h(n-1,2)=n-1+\frac{4}{n-1}.
\end{align*}
 Now,
\begin{align}
&ASO((K_{t}\cup K_{n-k-t})\vee K_k)\nonumber\\
&={k\choose 2}\,\sqrt{h(n-1,n-1)}+{n-k-t\choose 2}\,\sqrt{h(n-t-1,n-t-1)}\nonumber\\
&~~~~~~~~+{t\choose 2}\,\sqrt{h(k+t-1,k+t-1)}+kt\,\sqrt{h(n-1,k+t-1)}\nonumber\\
&~~~~~~~~+k(n-k-t)\,\sqrt{h(n-1,n-t-1)}\nonumber\\
&<{k\choose 2}\,\sqrt{n+\frac{1}{n-2}}+{n-k-t\choose 2}\,\sqrt{n-1+\frac{1}{n-3}}\nonumber\\
&~~~~~~~~+{t\choose 2}\,\sqrt{n-1+\frac{1}{n-3}}+kt\,\sqrt{n-1+\frac{4}{n-1}}\nonumber\\
&~~~~~~~~+k(n-k-t)\,\sqrt{n-1+\frac{4}{n-1}}\nonumber\\
&=\left({t\choose 2}+{n-k-t\choose 2}\right)\,\sqrt{n-1+\frac{1}{n-3}}+{k\choose 2}\,\sqrt{n+\frac{1}{n-2}}\nonumber\\
&~~~~~~~~+k(n-k)\,\sqrt{n-1+\frac{4}{n-1}}.\label{p1}
\end{align}
Let us consider a function
  $$f(x)=x\,(x-1)+(n-k-x)\,(n-k-x-1),~~~2\leq x\leq \frac{n-k}{2}.$$
Then we obtain
   $$f'(x)=2\,\Big(2x-(n-k)\Big)\leq 0~\mbox{ as }2\leq x\leq \frac{n-k}{2}.$$
Thus $f(x)$ is a decreasing function on $2\leq x\leq \frac{n-k}{2}$, and hence
\begin{align}
{t\choose 2}+{n-k-t\choose 2}=\frac{1}{2}\,f(t)&\leq \frac{1}{2}\,f(2)=\frac{1}{2}\,\Big((n-k-2)\,(n-k-3)+2\Big)\nonumber\\[3mm]
  &={n-k-1\choose 2}-(n-k-3).\label{p2}
\end{align}
Since
   $$   n-1+\frac{4}{n-1}\leq n-\frac{1}{2}+\frac{5}{2(2n-5)},$$
from (\ref{p1}) and (\ref{p2}), we obtain
\begin{align}\label{eq-Ak-04}
&ASO((K_{t}\cup K_{n-k-t})\vee K_k)\nonumber\\[3mm]
&<{n-k-1\choose 2}\,\sqrt{n-1+\frac{1}{n-3}}+{k\choose 2}\,\sqrt{n+\frac{1}{n-2}}\nonumber\\[3mm]
&~~~~~~~~+k(n-k)\,\sqrt{n-1+\frac{4}{n-1}}-(n-k-3)\,\sqrt{n-1+\frac{1}{n-3}}\nonumber\\[3mm]
&<{n-k-1\choose 2}\,\sqrt{n-1+\frac{1}{n-3}}+{k\choose 2}\,\sqrt{n+\frac{1}{n-2}}\nonumber\\[3mm]
&~~~~~~~~~~
+k\,(n-k-1)\,\sqrt{n-\frac{1}{2}+\frac{5}{2(2n-5)}}\nonumber\\[3mm]
&~~~~~~~~~~+k\,\sqrt{\frac{(n-1)^2+k^2}{n+k-3}}.
\end{align}
For the last inequality in \eqref{eq-Ak-04}, it remains to prove that
\begin{align*}
k\,\sqrt{n-1+\frac{4}{n-1}}-(n-k-3)\,\sqrt{n-1+\frac{1}{n-3}}<k\,\sqrt{\frac{(n-1)^2+k^2}{n+k-3}},
\end{align*}
which is equivalent to
\begin{align}\label{Eq-connectivity-01}
k\left(\sqrt{h(k,n-1)}-\sqrt{h(2,n-1)}\,\right)+(n-k-3)\,\sqrt{h(n-2,n-2)}>0.
\end{align}
If $k=1$ or $k=2$, then by Lemma \ref{ee1},  $\sqrt{h(k,n-1)}-\sqrt{h(2,n-1)}\ge0$, and hence \eqref{Eq-connectivity-01} holds. In what follows, we assume that $k\ge3$. Then, by Lemma \ref{ee1}, we have $\sqrt{h(k,n-1)}-\sqrt{h(2,n-1)}<0$. Hence,
in order to prove \eqref{Eq-connectivity-01}, it is sufficient to show that
\[
(n-4)\left(\sqrt{h(k,n-1)}-\sqrt{h(2,n-1)}\,\right)+(n-k-3)\,\sqrt{h(n-2,n-2)}>0
\]
for $3\le k\leq n-4$.
We define a function $\phi$ on the set of real numbers greater than or equal to 3 as
{\small\begin{align*}
\phi(x)=(n-4)\left(\sqrt{h(x,n-1)}-\sqrt{h(2,n-1)}\,\right)+(n-x-3)\,\sqrt{h(n-2,n-2)},
\end{align*}}
where $n\ge7$ is a fixed integer satisfying $3\le x\leq n-4$. Here, we have
$$\frac{d^2}{dx^2}\left(\sqrt{h(x,n-1)}\,\right)=\frac{\Psi(x)}{4 (n+x-3)^{5/2} \left((n-1)^2+x^2\right)^{3/2}},$$
where
{\small$$\Psi(x)=\left[(n-1)^2 (n (7 n-30)+39)-x^4\right]+4x (n-3)[(n-1)^2 - x^2]+6 (n-1)^2 x^2.$$}
Since $(n-1)^2 (n (7 n-30)+39)>n^4>x^4$ and $(n-1)^2>x^2$, we have $\Psi(x)>0$ and hence
$$\frac{d^2}{dx^2}\left(\sqrt{h(x,n-1)}\,\right)>0.$$
Consequently, we obtain
\begin{align*}
\phi'(x)&\le(n-4)\left[\frac{d}{dx}\left(\sqrt{h(x,n-1)}\,\right)\right]_{x=n-4}-\sqrt{h(n-2,n-2)}\\[2mm]
&=\frac{(n-4) (2n^2-20n+39)}{2(2n-7)^{3/2} \sqrt{2n^2-10n+17}}- \sqrt{n-1+\frac{1}{n-3}}\\[2mm]
&<\frac{(n-4) (2n^2-20n+39)}{2(2n-8)^{3/2} \sqrt{2n(n-5)+17}}- \sqrt{n-1+\frac{1}{n-3}}\\[2mm]
&=\frac{\left(2n^2 -20n+39\right) \sqrt{2(n-3)} -8(n-2) \sqrt{n-4} \sqrt{2n(n-5)+17}}{8 \sqrt{(n-4)(n-3)} \sqrt{2n(n-5)+17}}\\[2mm]
&<\frac{\left(2n^2 -20n+39\right) 2\sqrt{n-4} -8(n-2) \sqrt{n-4} \sqrt{2n(n-5)+17}}{8 \sqrt{(n-4)(n-3)} \sqrt{2n(n-5)+17}}\\&<0,
\end{align*}
where the last inequality holds because $2n(n-5)+17>(n-2)^2$ and $2n^2 -20n+39<4(n-2)^2$ for $n\ge7$.
Therefore, by keeping in mind Lemma \ref{ee1}, we have
\begin{align}\label{Eq-con-last}
&\phi(x)\ge \phi(n-4)\nonumber\\&=(n-4)\left(\sqrt{h(n-4,n-1)}-\sqrt{h(2,n-1)}\,\right)+\sqrt{h(n-2,n-2)}\nonumber\\
&>(n-3)\sqrt{h(n-4,n-1)}-(n-4)\sqrt{h(2,n-1)} >0.
\end{align}
The last inequality in \eqref{Eq-con-last} holds because, for $n\ge7$,
\[
\left(\frac{n-3}{n-4}\right)^2>\frac{h(2,n-1)}{h(n-4,n-1)},
\]
which is equivalent to $$-3n^3(n - 9) -n(116n-319)-407<0.$$
Therefore, $\phi(k)>0$, which implies \eqref{Eq-connectivity-01}.
This completes the proof of the theorem.
\end{proof}

We now establish a result concerning the edge connectivity.

\begin{theorem} Let $G$ be a connected graph of order $n\ge4$ with edge connectivity $k'$. Then
\begin{align}
ASO(G)&\leq k'\,\sqrt{\frac{(n-1)^2+k'^2}{n+k'-3}}+{k'\choose 2}\,\sqrt{n+\frac{1}{n-2}}\nonumber\\[3mm]
&\quad+k'\,(n-k'-1)\,\sqrt{n-\frac{1}{2}+\frac{5}{2(2n-5)}}\nonumber\\[3mm]
&\quad+{n-k'-1\choose 2}\,\sqrt{n-1+\frac{1}{n-3}}\label{1k2-e}
\end{align}
with equality in (\ref{1k2-e}) if and only if $G\cong (K_1\cup K_{n-k'-1})\vee K_{k'}$ for $k'\le n-2$, whereas $G\cong K_n$ for $k'=n-1$.
\end{theorem}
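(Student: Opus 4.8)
The plan is to reduce this edge-connectivity statement to the vertex-connectivity result already established in Theorem~\ref{k0}, exploiting the classical inequality $\kappa(G)\le\lambda(G)$ relating the vertex connectivity $\kappa(G)$ and the edge connectivity $\lambda(G)$ of a connected graph. Write $B(k)$ for the common right-hand side of \eqref{1k2} and \eqref{1k2-e}, regarded as a function of the connectivity parameter; the two bounds are literally the same expression in $k$ and $k'$, which is exactly what makes the reduction possible. The first step is the trivial case $k'=n-1$, where $G\cong K_n$ and equality holds. For $k'\le n-2$, since $G$ has edge connectivity $k'$, the inequality $\kappa(G)\le\lambda(G)=k'$ applies, and as $G$ is connected we have $1\le\kappa(G)\le k'\le n-2$.

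The central step is to prove that $B$ is strictly increasing on $\{1,\dots,n-2\}$. For this I would observe that $(K_1\cup K_{n-k-1})\vee K_k$ arises from $(K_1\cup K_{n-k-2})\vee K_{k+1}$ by deleting a single edge: moving one vertex out of the clique $K_{n-k-1}$ into the universal factor $K_{k+1}$ creates exactly one new adjacency, namely the edge joining that vertex to the lone vertex of the $K_1$ factor, while every other adjacency is unchanged. Since both graphs are connected on $n\ge4$ vertices, neither contains an isolated edge, so Theorem~\ref{k1} applies and yields $B(k)<B(k+1)$; chaining these gives strict monotonicity. Applying Theorem~\ref{k0} to $G$, whose vertex connectivity is exactly $\kappa(G)$, then gives
\[
ASO(G)\le B(\kappa(G))\le B(k'),
\]
which is precisely the bound \eqref{1k2-e}.

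The final step is the equality analysis. Equality forces $B(\kappa(G))=B(k')$, so strict monotonicity yields $\kappa(G)=k'$, and the equality characterization in Theorem~\ref{k0} then forces $G\cong(K_1\cup K_{n-k'-1})\vee K_{k'}$. It remains only to confirm that this extremal graph is admissible here, i.e.\ that it indeed has edge connectivity $k'$. This is immediate: its unique minimum-degree vertex is the isolated vertex of the $K_1$ factor, which is joined precisely to the $k'$ universal vertices, so $\delta=k'$ and hence $\lambda\le\delta=k'$; combined with $\lambda\ge\kappa=k'$ (its vertex connectivity being $k'$ by Theorem~\ref{k0}), we conclude $\lambda=k'$.

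I expect the only genuine work to lie in the monotonicity step, and even there the ``subgraph-by-one-edge'' observation reduces it to a direct invocation of Theorem~\ref{k1}; the one point needing care is verifying that the edge-deletion hypothesis (no isolated edge in either graph) holds, which it does automatically since both graphs are connected and have at least four vertices. The remaining delicate check is purely bookkeeping, namely that the graph selected by equality in Theorem~\ref{k0} is consistent with the prescribed edge connectivity, which the minimum-degree computation settles.
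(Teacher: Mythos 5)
Your proof is correct, and its outer skeleton coincides with the paper's: dispose of $k'=n-1$ trivially, then combine Whitney's inequality $\kappa(G)\le\lambda(G)=k'$ with Theorem~\ref{k0} and strict monotonicity of the common bound $B(\cdot)$, and finish with the same equality analysis. The genuine difference lies in how monotonicity is established. The paper does it analytically: it regards the bound as a real function $g(x)$ on $1\le x\le n-2$ and proves $g'(x)>0$ via the derivative estimates \eqref{b2} and \eqref{b3}, which is the bulk of its proof. You instead note that $B(k)=ASO\big((K_1\cup K_{n-k-1})\vee K_k\big)$ by the equality part of Theorem~\ref{k0}, and that $(K_1\cup K_{n-k-1})\vee K_k$ is obtained from $(K_1\cup K_{n-k-2})\vee K_{k+1}$ by deleting the single edge joining the $K_1$-vertex to one vertex of the join factor (your verification of this isomorphism is right), so Theorem~\ref{k1} gives $B(k)<B(k+1)$ outright; since both graphs are connected of order $n\ge4$, the no-isolated-edge hypothesis is automatic. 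This buys a purely combinatorial, calculus-free argument that reuses the edge-deletion theorem of Section~\ref{sec-3}, yields strictness for free, and incidentally sidesteps a rough spot in the paper's closing display, whose first terms read $k\sqrt{n+2/(n-2)}$ and $k'\sqrt{n+2/(n-2)}$ even though the bound \eqref{1k2-e} requires the terms $k\sqrt{\big((n-1)^2+k^2\big)/(n+k-3)}$ that the monotonicity of $g$ was actually designed to compare. A further point in your favor: you explicitly check that $(K_1\cup K_{n-k'-1})\vee K_{k'}$ has edge connectivity $k'$ (via $\kappa\le\lambda\le\delta=k'$), a verification the paper leaves implicit in its equality statement.
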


\begin{proof} For $k'=n-1$, we have $G\cong K_n$ and hence, the equality in (\ref{1k2-e}) holds. In what follows, we assume that $1\leq k'\leq n-2$. Thus, we have $1\leq k\leq k'\leq n-2$.
By the proof of Lemma \ref{a1}, the function $$f(x)=\displaystyle{\sqrt{\frac{(n-1)^2+x^2}{n+x-3}}}$$ is a strictly increasing function on $x\geq -n+3+\sqrt{2(n^2-4n+5)}$ and a strictly decreasing function on $1\leq x\leq -n+3+\sqrt{2(n^2-4n+5)}$. Thus we have
\begin{align}
\sqrt{\frac{(n-1)^2+x^2}{n+x-3}}&\geq f\Big(-n+3+\sqrt{2(n^2-4n+5)}\Big)\nonumber\\[3mm]
     &=\sqrt{\frac{4\,(n^2-4n+5)-2\,(n-3)\,\sqrt{2(n^2-4n+5)}}{\sqrt{2(n^2-4n+5)}}}\nonumber\\[3mm]
     &=\sqrt{2\sqrt{2(n^2-4n+5)}-2\,(n-3)}\nonumber\\[2mm]
     &>\sqrt{2\,\sqrt{2}\,(n-2)-2n+6}>0.9\,\sqrt{n}.\label{b2}
\end{align}
For $1\leq x\leq n-2$, we obtain
\begin{align}
x\,\Big(x\,(2n+x-6)-(n-1)^2\Big)&>-(n-1)^2\,x>-1.8\,\sqrt{n}\,(n-1)(n-2)^{3/2}\nonumber\\[2mm]
  &>-1.8\,\sqrt{n}\,\sqrt{(n-1)^2+x^2}\,(n+x-3)^{3/2},\nonumber
\end{align}
that is,
\begin{align}
\frac{x\,\Big(x\,(2n+x-6)-(n-1)^2\Big)}{2\,\sqrt{(n-1)^2+x^2}\,(n+x-3)^{3/2}}> -0.9\,\sqrt{n}.\label{b3}
\end{align}
Let us consider a function
\begin{align*}
g(x)&={x\choose 2}\,\sqrt{n+\frac{1}{n-2}}+x\,(n-x-1)\,\sqrt{n-\frac{1}{2}+\frac{5}{2(2n-5)}}\nonumber\\[3mm]
&\quad + x\,\sqrt{\frac{(n-1)^2+x^2}{n+x-3}}+{n-x-1\choose 2}\,\sqrt{n-1+\frac{1}{n-3}},
\end{align*}
for $1\leq x\leq n-2$.
If $1\le x \le \frac{n-1}{2}$, then using (\ref{b2}) and (\ref{b3}),  we have
\begin{align*}
&g'(x)=\sqrt{\frac{(n-1)^2+x^2}{n+x-3}}+\frac{x\,\Big(x\,(2n+x-6)-(n-1)^2\Big)}{2\,\sqrt{(n-1)^2+x^2}\,(n+x-3)^{3/2}}\nonumber\\[3mm]
&\quad+\Big(x-\frac{1}{2}\Big)\,\sqrt{n+\frac{1}{n-2}}+(n-2x-1)\,\sqrt{n-\frac{1}{2}+\frac{5}{2(2n-5)}}\nonumber\\[3mm]
&\quad-\Big((n-x-1)-\frac{1}{2}\Big)\,\sqrt{n-1+\frac{1}{n-3}}\nonumber\\[3mm]
&>\left(\Big(x-\frac{1}{2}\Big)+(n-2x-1)-(n-x-1)+\frac{1}{2}\right)\,\sqrt{n-1+\frac{1}{n-3}}=0.
\end{align*}
If $x > \frac{n-1}{2}$, then using (\ref{b2}) and (\ref{b3}),  we have
\begin{align*}
g'(x)&>\left(\Big(x-\frac{1}{2}\Big)+(n-2x-1)-(n-x-1)+\frac{1}{2}\right)\,\sqrt{n+\frac{1}{n-2}}=0.
\end{align*}
Thus, $g(x)$ is a strictly increasing function. Since $1\leq k\leq k'\leq n-2$, by Theorem \ref{k0}, we obtain
\begin{align*}
ASO(G)&\leq k\,\sqrt{n+\frac{2}{n-2}}+{k\choose 2}\,\sqrt{n+\frac{1}{n-2}}\\[3mm]
&\quad+k\,(n-k-1)\,\sqrt{n-\frac{1}{2}+\frac{5}{2(2n-5)}}\\[3mm]
&\quad+{n-k-1\choose 2}\,\sqrt{n-1+\frac{1}{n-3}}\\[3mm]
&\leq k'\,\sqrt{n+\frac{2}{n-2}}+{k'\choose 2}\,\sqrt{n+\frac{1}{n-2}}\\[3mm]
&\quad+k'\,(n-k'-1)\,\sqrt{n-\frac{1}{2}+\frac{5}{2(2n-5)}}\nonumber\\[3mm]
&\quad+{n-k'-1\choose 2}\,\sqrt{n-1+\frac{1}{n-3}}.
\end{align*}
Moreover, both equalities in the above chain of inequalities hold if and only if $G$ is isomorphic to $(K_1\cup K_{n-k'-1})\vee K_{k'}$.
\end{proof}

\section{Concluding Remarks}\label{sec4}

The augmented Sombor (ASO) index is a recently introduced topological index that has already attracted attention, particularly for its chemical applications. In this work, we have investigated several mathematical properties of the ASO index. In particular, we establish the best possible upper bound for the ASO index of unicyclic graphs in terms of their order and characterized the corresponding extremal graphs. Also, we have shown that the ASO index decreases (increases, respectively) whenever an edge is removed (added, respectively) to a graph under a mild condition. Using this result, we have characterized the graphs maximizing the ASO index among all connected graphs of order $n$ with prescribed vertex connectivity (or edge connectivity) for $n\ge4$. Despite these contributions, the extremal behavior of the ASO index remains largely unexplored for many classical families of graphs, thereby presenting a variety of promising directions for further investigation.


\singlespacing

\end{document}